\theoremstyle{definition}
\newtheorem{example}{Example}
\newtheorem{definition}{Definition}
\theoremstyle{plain}
\newtheorem{theorem}{Theorem}
\newtheorem{lemma}{Lemma}
\newtheorem{proposition}{Proposition}
\newtheorem{corollary}{Corollary}
\theoremstyle{remark}
\newtheorem{remark}{Remark}
\theoremstyle{definition}
\def\N{\mathbb{N}}
\def\P{\mathbb{P}}
\def\p{\mathbb{P}}
\def\E{\mathbb{E}}
\def\R{\mathbb{R}}
\def\H{\mathcal{H}}
\DeclareMathOperator*{\esssup}{ess\text{-}sup}
\DeclareMathOperator*{\essinf}{ess\text{-}inf}
\newcommand{\VaR}{\mathrm{VaR}}
\newcommand{\bd}{\bar{\theta}}
\renewcommand{\(}{\left(}
\renewcommand{\)}{\right)}
\def\id{\mathds{1}}
\begin{document}

\title{Risk aggregation and stochastic dominance for a class of heavy-tailed distributions}

\author{Yuyu Chen\thanks{Department of Economics, University of Melbourne,  Australia. \Letter~{\scriptsize\url{yuyu.chen@unimelb.edu.au}}}
	\and Seva Shneer \thanks{Department of Actuarial Mathematics and Statistics, Heriot-Watt University,  UK. \Letter~{\scriptsize\url{V.Shneer@hw.ac.uk}}}
}

\maketitle

\begin{abstract}
We introduce a new class of heavy-tailed distributions for which any weighted average of independent and identically distributed random variables is larger than one such random variable in (usual) stochastic order. We show that many commonly used extremely heavy-tailed (i.e., infinite-mean) distributions, such as the Pareto, Fr\'echet, and Burr distributions, belong to this class. 
The established stochastic dominance relation can be further generalized to allow negatively dependent or non-identically distributed random variables. In particular, the weighted average of non-identically distributed random variables dominates their distribution mixtures in stochastic order.

\textbf{Keywords}: heavy-tailed distributions; stochastic order; negative dependence; infinite mean.
\end{abstract}

\section{Introduction}
Distributions with infinite mean are ubiquitous in the realm of banking and insurance, and they are particularly useful in modeling catastrophic losses (\cite{IJW09}), operational losses (\cite{moscadelli2004modelling}), costs of cyber risk events (\cite{EW19}),  and financial returns from technology innovations (\cite{silverberg2007size}); see also \cite{CW25} for a list of empirical examples of distributions with infinite mean. 

As the world is arguably finite (e.g., any loss is bounded by the total wealth in the world), why should we use models with infinite mean as mathematical tools? The main reason is that infinite-mean models often fit extremely heavy-tailed datasets better than finite-mean models. Moreover, the sample mean of iid samples of heavy-tailed data may not converge or may even tend to infinity as the sample size increases. Therefore, it is not sufficient to conclude that infinite-mean models are unrealistic by the finiteness of the sample mean. Indeed, models with infinite moments are not ``improper" as emphasized by \cite{mandelbrot2013fractals}, and they have been extensively used in the financial and economic literature (see \cite{mandelbrot2013fractals} and \cite{C01}). 

This paper focuses on establishing some stochastic dominance relations for infinite-mean models.
For two random variables $X$ and $Y$, $X$ is  said to be smaller than $Y$ in stochastic order, denoted by $X \le_{\rm st } Y$, if
$\P(X \le x) \ge \P(Y\le x)$ for all  $x \in \R$; see  \cite{MS02} and \cite{SS07} for extensive accounts of properties of stochastic dominance. Let $X$ be a positive one-sided stable random variable with infinite mean and
$X_{1},\dots,X_{n}$ be iid copies of $X$. 
For a nonnegative vector $\(\theta_{1},\dots,\theta_n\)$ with $\sum_{i=1}^n\theta_i=1$, \cite{ibragimov2005new} showed that  
\begin{equation}
 \label{eq:*}
       X\le_{\rm st}\theta_{1}X_{1}+\dots+\theta_{n}X_{n}.
\end{equation} 
Recently,  \cite{arab2024convex}, \cite{CEW24a}, and \cite{muller2024some} have shown that  inequality \eqref{eq:*} holds for more general classes of distributions. The case of two Pareto random variables with tail parameter 1/2 was studied in Example 7 of \cite{embrechts2002correlation}; see Section \ref{sec:SHdist} for the precise definition of the Pareto distribution. 

Inequality \eqref{eq:*} provides very strong implications in decision making as it surprisingly holds in the strongest form of risk comparison. If $X_1,\dots,X_n$ are treated as losses in a portfolio selection problem, any agent who prefers less loss will choose to take one of $X_1,\dots,X_n$ instead of allocating their risk exposure over different losses. This observation is counterintuitive, contrasting with the common belief that diversification reduces risk. Other applications of \eqref{eq:*} include optimal bundling problems (\cite{IW10}) and risk sharing (\cite{CEW24b}). 
 
In this paper, we will study \eqref{eq:*} where $X_1,\dots,X_n$ are possibly negatively dependent, a case not considered in \cite{ibragimov2005new},  \cite{arab2024convex}, and \cite{muller2024some}. \cite{CEW24a} have shown that \eqref{eq:*} also holds for weakly negatively associated super-Pareto random variables $X_{1},\dots,X_{n}$.  The class of super-Pareto random variables is quite broad and can be obtained by applying increasing and convex transforms to a Pareto random variable with tail parameter $1$. Examples of super-Pareto distributions include the Pareto, generalized Pareto, Burr, paralogistic, and log-logistic distributions, all with infinite mean. 

This paper aims to further generalize the result of \cite{CEW24a} in two aspects: the marginal distribution and the dependence structure of $(X_{1},\dots,X_{n})$. In Section  \ref{sec:SHdist}, we first introduce a new class of distributions, which  has several nice properties (Propositions \ref{prop:superHT} and \ref{prop:mixture}) and includes the class of super-Pareto distributions as a special case. 
Within this class of distributions,
we show in Theorem \ref{thm:main} that \eqref{eq:*} holds for identically distributed  random variables $X_1,\dots,X_n$ that are negatively lower orthant dependent (\cite{block1982some}).  It is well known that the behavior of the sum of extremely heavy-tailed random variables is dominated by the maximum of the summands (see \cite{EKM97}). Therefore, a possible reason why \eqref{eq:*} is preserved when transitioning from independence to negative dependence is because under negative dependence,  random variables that take small to moderate values will push the other random variables to take large values with a larger probability, 
 leading to a stochastically larger $\sum_{i=1}^n\theta_iX_i$. The situation is different for positively dependent random variables; see Remark  \ref{re:positive}.
As negative lower orthant dependence is more general than weak negative association,  Theorem 1 (i) of \cite{CEW24a} is implied by Theorem \ref{thm:main}. Remarkably, while Theorem \ref{thm:main} is more general, it is shown by a much more concise proof.

In Section \ref{sec:extensions}, we proceed to study \eqref{eq:*} given non-identically distributed random variables $X_1,\dots,X_n$. Since $X_1,\dots,X_n$ do not follow the same distribution, the choice of $X$ becomes unclear. A possible choice is to let $X$ follow the generalized mean of the distributions of $X_1,\dots,X_n$. 
 A special case is the arithmetic mean, which leads to the commonly used distribution mixture models.  Considering a rather large class of distributions,
Theorem \ref{thm:frechet} shows that \eqref{eq:*} holds if the distribution of $X$ is the generalized mean with non-negative power of the distributions of $X_1,\dots,X_n$. To our best knowledge, Theorem \ref{thm:frechet} is the first attempt to establish a non-trivial version of  \eqref{eq:*} for non-identically distributed random variables.

The rest of the paper is organized as follows. In Section \ref{sec:SD}, we present some first observations on \eqref{eq:*}.  Sections \ref{sec:SHdist} and  \ref{sec:extensions} present the main results. Section \ref{sec:comparison} compares our results with  the literature. Section \ref{sec:con} concludes the paper. The appendix contains the proofs of Propositions \ref{prop:superHT} and \ref{prop:mixture} as well as some examples in the new class of distributions.

\subsection{Notation, conventions and definitions}

In this section, we collect some notation and conventions used throughout the rest of the paper and remind the reader of some well-known definitions.

A function $f$ on $(0,\infty)$ is said to be \emph{subadditive} if $f(x+y)\le f(x)+f(y)$ for any $x,y>0$. If the inequality is strict, we say $f$ is  \emph{strictly subadditive}. For a random variable $X\sim F$, denote by $\essinf X$ ($\essinf F$) and $\esssup X$ ($\esssup F$) its essential infimum and essential supremum. Denote by $\Delta_n$   the standard simplex, that is, $\Delta_n=\{\bd \in [0,1]^n:  \sum_{i=1}^n \theta_i=1\}$, where we use notation $\bd$ for a vector $(\theta_1,\dots,\theta_n)$. Let $\Delta_n^+=\Delta_n\cap (0,1)^n$. We will also use $[n]$ to denote the set of indices $1,\dots,n$. For a distribution function $F$, its generalized inverse is defined as
$$F^{-1}(p)=\inf\{t\in\R:F(t)\geq p\},~ p\in(0,1).$$

\begin{definition}
We say that a random variable $X$ is smaller than a random variable $Y$ in \emph{stochastic order}, denoted by $X \le_{\rm st } Y$, if
$
\P(X \le x) \ge \P(Y\le x)$ for all  $x \in \R$.
For random variables $X$ and $Y$ with support $[c,\infty)$ where $c\in\R$, we write $X<_{\rm st}Y$ if $\p(X\le x)>\p(Y\le x)$ for all $x>c$.

\end{definition}

\section{Some observations on the stochastic dominance}\label{sec:SD}

Throughout the paper, we work with random variables which are almost surely non-negative.

The main focus of the paper is on studying random variables $X$ such that
\begin{equation} \label{eq:main_property}
X \le_{\rm st} \theta_1 X_1 + \dots + \theta_n X_n\mbox{~for all $\bd \in \Delta_n$,}\tag{SD}
\end{equation}
 where $X_1, \dots, X_n$ are  independent or negatively dependent with the  marginal laws  equal to $X$ (see Section \ref{sec:NLOD} for the precise definition of negative dependence). We will also say that a distribution $F$ satisfies property \eqref{eq:main_property} if a random variable $X\sim F$ satisfies it. If some of $\theta_1,\dots,\theta_n$ are 0, we can simply reduce the dimension of our problem. Therefore, for most of our results, we will assume $\bd\in\Delta_n^+$.

Since \eqref{eq:main_property} holds if a constant is added to $X$, we will, without loss of generality, only consider random variables with essential infimum $0$.
We will also be interested in distributions, and random variables, for which property \eqref{eq:main_property} holds with a strict inequality.
Let us start by formulating and providing some straightforward observations of \eqref{eq:main_property}. 

\begin{proposition} \label{prop:first_properties}
Assume that random variables $X$ and $Y$ satisfy property \eqref{eq:main_property} and are independent. Then the following statements hold.
\begin{enumerate}[label=(\roman*)]

\item $\E(X) = \infty$ or $X$ is a constant.

\item A random variable $aX + b$ with $a \ge 0$ and  $b \in \R$ satisfies \eqref{eq:main_property}.

\item Random variables $\max\{X,c\}$ and $\max\{X,Y\}$  satisfy \eqref{eq:main_property}, with $c \ge 0$.

\item A random variable $g(X)$ with a convex non-decreasing function $g$ satisfies \eqref{eq:main_property}. In addition, if $X$ satisfies \eqref{eq:main_property} with a strict inequality, $g$ is convex and strictly increasing, then $g(X)$ also satisfies \eqref{eq:main_property} with a strict inequality.

\end{enumerate}

\end{proposition}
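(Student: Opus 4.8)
The plan is to establish each of the four parts of Proposition~\ref{prop:first_properties} separately, treating the property \eqref{eq:main_property} as a robust structural condition that interacts well with stochastic order. Throughout, I would lean heavily on the elementary fact that $\le_{\rm st}$ is preserved under increasing functions and under taking independent copies, and that the defining inequality of \eqref{eq:main_property} is a statement purely about the joint law of $(X_1,\dots,X_n)$ and its weighted sum.

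\medskip

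For part (i), the idea is a contrapositive/dichotomy argument. Suppose $X$ is not a constant and yet $\E(X)<\infty$. Taking $\bd=(1/n,\dots,1/n)$, property \eqref{eq:main_property} gives $X\le_{\rm st}\frac1n\sum_{i=1}^nX_i$ for every $n$, where the $X_i$ are iid copies of $X$. Since stochastic dominance implies dominance of expectations (for nonnegative variables), we get $\E(X)\le\E\big(\frac1n\sum X_i\big)=\E(X)$, so the means match; the real content is that $\le_{\rm st}$ between two variables with equal finite mean, one of which is an average, forces rigidity. I would invoke the strong law of large numbers: $\frac1n\sum_{i=1}^n X_i\to\E(X)$ almost surely, hence in distribution, so the right-hand side converges in law to the constant $\E(X)$. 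Passing to the limit in $X\le_{\rm st}\frac1n\sum X_i$ yields $X\le_{\rm st}\E(X)$, i.e.\ $\P(X\le \E(X))=1$; combined with $\E(X)<\infty$ and $\essinf X=0$ this squeezes $X$ to be the constant, a contradiction. The main obstacle here is handling the limit of stochastic dominance carefully at the point of continuity.

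\medskip

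Parts (ii)--(iv) are all instances of the same principle: apply a fixed transformation to each coordinate and exploit that the weighted average either commutes with or is dominated by that transformation. For (ii), affine maps commute with convex combinations exactly, so $a\big(\sum\theta_iX_i\big)+b=\sum\theta_i(aX_i+b)$ when $\sum\theta_i=1$, and $x\mapsto ax+b$ is nondecreasing for $a\ge0$, so applying it to both sides of $X\le_{\rm st}\sum\theta_iX_i$ preserves the order. Parts (iii) and (iv) are the crux and the expected main obstacle: the key inequality is Jensen-type, namely that for a convex nondecreasing $g$ one has $g\big(\sum\theta_i x_i\big)\le\sum\theta_i g(x_i)$ pointwise, so $g(\sum\theta_iX_i)\le\sum\theta_i g(X_i)$ almost surely; combined with monotonicity of $g$ applied to $X\le_{\rm st}\sum\theta_iX_i$ this chains to $g(X)\le_{\rm st}g(\sum\theta_iX_i)\le_{\rm st}\sum\theta_ig(X_i)$, giving \eqref{eq:main_property} for $g(X)$. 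I would treat (iii) as the special case $g(x)=\max\{x,c\}$ (and, for $\max\{X,Y\}$, a two-variable convex nondecreasing map), checking that these are indeed convex and nondecreasing. For the strict-inequality refinement in (iv), I would verify that strict monotonicity of $g$ preserves strict stochastic dominance $<_{\rm st}$ under the change of variables, using that $g$ maps the support point-continuously and that strictness of the inequality $\P(X\le x)>\P(Y\le x)$ transfers through a strictly increasing $g$. The delicate point is ensuring the independence structure is not disrupted: since $g$ is applied coordinatewise, the copies $g(X_1),\dots,g(X_n)$ remain independent with common law $g(X)$, so the dominance in the correct class is genuinely inherited.
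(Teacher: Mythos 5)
Your proposal is correct, and for parts (ii) and (iv) it is essentially the paper's argument verbatim: affine commutation for (ii), and the chain $g(X)\le_{\rm st} g\left(\sum_{i=1}^n\theta_iX_i\right)\le\sum_{i=1}^n\theta_ig(X_i)$ (monotonicity for the first step, pointwise Jensen for the second) for (iv). The genuine differences are in (i) and (iii). For (i), the paper simply cites Proposition 2 of \cite{CEW24a}, whereas you give a self-contained argument: take iid copies (legitimate, since independence is a special case of the permitted dependence in \eqref{eq:main_property}), apply the strong law so that $\frac1n\sum_{i=1}^nX_i$ converges in law to the constant $\mu=\E(X)$, pass to the limit in $\P(X\le x)\ge\P\left(\frac1n\sum_{i=1}^nX_i\le x\right)$ at $x>\mu$ to get $X\le\mu$ a.s., and conclude $X=\mu$ a.s.\ from $\E(\mu-X)=0$ with $\mu-X\ge0$; this is clean and arguably a nice addition (note the intermediate remark about matching expectations, and the appeal to $\essinf X=0$, are not needed). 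For (iii), you derive $\max\{X,c\}$ as the special case $g(x)=\max\{x,c\}$ of (iv), which the paper leaves implicit, and for $\max\{X,Y\}$ your ``two-variable convex nondecreasing map'' sketch is morally the paper's proof, but the one step you leave unjustified is exactly where the paper's computation lives: you need $\max\{X,Y\}\le_{\rm st}\max\left\{\sum_{i=1}^n\theta_iX_i,\sum_{i=1}^n\theta_iY_i\right\}$, which does not follow from univariate monotone invariance alone but from independence of the two families, via
\begin{align*}
\P(\max\{X,Y\}\le x)&=\P(X\le x)\,\P(Y\le x)\\
&\ge\P\left(\sum_{i=1}^n\theta_iX_i\le x\right)\P\left(\sum_{i=1}^n\theta_iY_i\le x\right)=\P\left(\max\left\{\sum_{i=1}^n\theta_iX_i,\sum_{i=1}^n\theta_iY_i\right\}\le x\right),
\end{align*}
after which the pointwise bound $\max\left\{\sum_i\theta_ix_i,\sum_i\theta_iy_i\right\}\le\sum_i\theta_i\max\{x_i,y_i\}$ finishes as in your Jensen step. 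You do flag that coordinatewise transformations preserve the independence of the copies $\max\{X_i,Y_i\}$, so this is an under-specified step rather than a wrong one; spelling out the product-of-CDFs display above would close it. Your treatment of strictness in (iv) via a strictly increasing $g$ is fine and is in fact more explicit than the paper, whose proof does not elaborate on the strict case.
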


\begin{proof}
\begin{enumerate}[label=(\roman*)]
\item This is implied by Proposition 2 of \cite{CEW24a}.
\item
The proof is straightforward and is omitted.
\item
 We will prove only the stronger property for the maximum of two random variables. Let $X_{1},\dots,X_{n}$ follow the distribution of $X$, $Y_{1},\dots,Y_{n}$ follow the distribution of $Y$, and $\{X_i\}_{i\in[n]}$ and $\{Y_i\}_{i\in[n]}$ be independent. For $x\in\R$ and $\bar \theta\in\Delta_n$, we have
\begin{align*}
\P(\max\{X,Y\} \le x) & = \P(X\le x) \P(Y\le x) \ge \P\left(\sum_{i=1}^n \theta_i X_i \le x\right) \P\left(\sum_{i=1}^n \theta_i Y_i \le x\right)\\
& = \P\left(\sum_{i=1}^n \theta_i X_i \le x, \sum_{i=1}^n \theta_i Y_i \le x\right) = \P\left(\max\left\{\sum_{i=1}^n \theta_i X_i, \sum_{i=1}^n \theta_i Y_i\right\} \le x\right) \\
& \ge \P\left(\sum_{i=1}^n \theta_i \max\{X_i,Y_i\} \le x\right).
\end{align*}


\item
Since $g$ is convex and non-decreasing, $g(X)\le_{\rm st}g(\sum_{i=1}^n\theta_{i}X_{i})\le \sum_{i=1}^n\theta_{i}g(X_{i})$,  where the first inequality holds as stochastic order is preserved under non-decreasing transforms and the second inequality is to be understood in the almost sure (and therefore also stochastic) sense and is due to convexity of $g$.
\qedhere
\end{enumerate}
\end{proof}

Properties (ii)-(iv) above demonstrate that, even if one knows only several random variables satisfying \eqref{eq:main_property}, it is possible to construct many more. Of special interest is property (iii), which does not require any specific distributional properties of $X$ and $Y$ apart from property \eqref{eq:main_property}.


\section{A class of heavy-tailed distributions and stochastic dominance}\label{sec:SHdist}

In this section, we introduce a new class of heavy-tailed distributions.   
We  explore several properties of this class and demonstrate that it contains many well-known distributions with infinite mean. 
We then prove that all distributions in this class satisfy property \eqref{eq:main_property}.
Along with the results of Proposition \ref{prop:first_properties}, this shows that the class of distributions satisfying property \eqref{eq:main_property} is large.

\subsection{A class of heavy-tailed distributions}
As has already been noted, we can, without loss of generality, consider random variables whose essential infimum is zero. For a random variable $X\sim F$ with $\essinf X=0$, we have  $F(x)>0$ for all $x>0$.

\begin{definition}
Let $F$ be a distribution function with $\essinf F=0$ and let
$h_F(x)=-\log F(1/x)$ for $x\in(0,\infty)$.
We say that $F$ belongs to $\mathcal H$, denoted by $F\in\mathcal H$, if $h_F$ is subadditive. We write $F\in\mathcal H_s$ if $h_F$ is strictly subadditive. For  $X\sim F$, we also write $X\sim \mathcal H$ (resp.~$X\sim \mathcal H_s$) if $F\in\mathcal H$ (resp.~$F\sim \mathcal H_s$).
\end{definition}

\begin{remark}
By properties of subadditive functions (e.g., Theorem 7.2.4 and Theorem 7.2.5 of \cite{hille1996functional}), $F\in \H$ if $h_F(x)/x$ is decreasing or $h_F$ is concave. 
\end{remark}

In the case of continuous distribution $F$, $F\in \mathcal H$ holds if and only if  the survival function of $1/X$ is log-superadditive where $X\sim F$.
We will see later that all distributions in $\H$ have infinite mean and because of that we say $\H$ is a class of heavy-tailed distributions. Note that the definition of heavy-tailed distributions varies in different contexts; see, e.g., Remark \ref{rem:HT}.
Below are some examples in class $\H$.
 
\begin{example}[Fr\'echet distribution]
For $\alpha>0$, the Fr\'echet distribution, denoted by Fr\'echet$(\alpha)$, is defined as
$$F(x)=\exp(-x^{-\alpha}), ~~x>0.$$
If  $\alpha\le 1$, $F$ has infinite mean. It is easy to check that $F\in\H$ if $\alpha\le 1$ and $F\in\H_s$ if $\alpha<1$, since for any $x,y>0$, 
$$\frac{h_F(x)+h_F(y)}{h_F(x+y)}=\left(\frac{x}{x+y}\right)^\alpha+\left(1-\frac{x}{x+y}\right)^\alpha\ge 1.$$
As $h_F$ is additive when $\alpha=1$, Fr\'echet$(1)$ distribution can be thought as a ``boundary'' of class $\H$.
\end{example}

\begin{example}[Pareto$(1)$ distribution]\label{ex:Pareto1}
For $\alpha>0$, the Pareto distribution, denoted by Pareto$(\alpha)$, is defined as
$$F(x)=1-\frac{1}{(x+1)^\alpha},~~ x>0.$$
Pareto$(\alpha)$ distributions have infinite mean if $\alpha\le 1$. Taking second derivative of $h_F$ when $\alpha=1$, we have 
$h''_F(x)=-1/(x+1)^2.$
Hence $h_F$ is concave and ${\rm Pareto}(1)\in\H_s$.
\end{example}

We can show that Pareto$(\alpha)$ with $\alpha\le 1$, as well as many other infinite-mean distributions in Table \ref{t1} also belong to $\H$ either directly using the definition, or using some closure properties of $\H$ in Propositions \ref{prop:superHT} and \ref{prop:mixture} provided below; see Appendix for detailed derivations of examples in Table \ref{t1} and the proofs of Propositions \ref{prop:superHT} and \ref{prop:mixture}.


   \begin{table}[h!]
  {
\begin{center}
\begin{tabular}{|c |c | c|}
\hline
 & Distribution functions & Parameters  \\ 
 \hline
Fr\'echet distribution & $F(x)=\exp(-x^{-\alpha}), ~~x>0$ & $\alpha\le 1$ \\  
\hline
Pareto distribution  & $F(x)=1-(x+1)^{-\alpha},~~ x>0$ & $\alpha\le 1$    \\
 \hline
 Generalized Pareto distribution  & $F(x)=1-\(1+\xi(x/\beta)\)^{-1/\xi},~~ x>0$ & $\xi\ge 1$    \\
 \hline
 Burr distribution  & $F(x)=1-\(x^\tau+1\)^{-\alpha}, ~~x>0$ & $\alpha,\tau\le 1$    \\
 \hline
Inverse Burr distribution   & $F(x)=\(x^\tau/(x^\tau+1)\)^\alpha,~~ x>0$ & $\alpha>0$, $\tau\le 1$    \\
 \hline
Log-Pareto distribution   & $F(x)=1-(\log(x+1)+1)^{-\alpha},~~x>0.$ & $\alpha\le 1$    \\
 \hline
Stoppa distribution   & $F(x)=\(1-(x+1)^{-\alpha}\)^\beta, ~~x>0$ & $\alpha\le 1$, $\beta>0$    \\
 \hline

\end{tabular}
 \caption{Examples of  distributions in $\H$}
 \label{t1}
\end{center}
}
\end{table}
\begin{proposition}\label{prop:superHT}
Let $X\sim F$ where $F\in \H$. The following statements hold.
\begin{enumerate}[label=(\roman*)]
\item 
If $F$ is strictly increasing on $[0,\infty)$ and $\p(X<\infty)=1$, then $F$ is continuous on $[0,\infty)$.
\item 
For $\beta>0$, $F^\beta\in \H$.

\item If, in addition, a random variable $Y\sim G$, where $G\in\H$, is independent of $X$, then $\max\{X,Y\}\in \H$. In terms of distribution functions, if $F,G\in\H$, then $FG\in\H$.

\item For a  non-decreasing, convex, and non-constant  function $f: \R_+\rightarrow \R_+$ with $f(0)=0$,  $f(X)\in\H$. 
\end{enumerate}
\end{proposition}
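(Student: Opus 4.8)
The plan is to route everything through the transform $h_F(x)=-\log F(1/x)$, recalling that $F\in\H$ means exactly that $h_F$ is subadditive, which after the substitution $p=1/s$, $q=1/t$ is equivalent to
\begin{equation*}
F\left(\frac{pq}{p+q}\right)\ge F(p)F(q)\quad\text{for all } p,q>0. \tag{$\dagger$}
\end{equation*}
Parts (ii) and (iii) are then immediate: since $h_{F^\beta}=\beta h_F$ is a positive multiple of a subadditive function it is subadditive, and since the law of $\max\{X,Y\}$ is $FG$ with $h_{FG}=h_F+h_G$ a sum of subadditive functions it too is subadditive; in both cases $\essinf$ stays $0$ because the relevant distribution functions remain strictly positive on $(0,\infty)$.

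For (i) I would first note that $h_F$ is non-decreasing (as $F$ is non-decreasing and $x\mapsto 1/x$ is decreasing), finite on $(0,\infty)$ (because $\essinf F=0$ forces $F>0$ there), and satisfies $\lim_{x\downarrow 0}h_F(x)=-\log\lim_{y\to\infty}F(y)=-\log\P(X<\infty)=0$. The key is then a standard fact: a non-decreasing subadditive function $h$ with $h(0^+)=0$ is continuous, since $h(x)\le h(x-\epsilon)+h(\epsilon)$ gives left-continuity and $h(x+\epsilon)\le h(x)+h(\epsilon)$ gives right-continuity as $\epsilon\downarrow 0$. Continuity of $h_F$ transfers to continuity of $F$ on $(0,\infty)$, and right-continuity at $0$ holds automatically for a distribution function.

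The substantive part is (iv). Writing $Z=f(X)$ with distribution $F_Z$, I would first record that $f$ is continuous (convex on $(0,\infty)$, and $f(0)=0$ together with $f\ge 0$ forces $f(0^+)=0$) and, being convex, non-decreasing and non-constant, unbounded; hence the threshold $\beta(z)=\sup\{x:f(x)\le z\}$ is finite and non-decreasing, $\{f(X)\le z\}=\{X\le\beta(z)\}$ so that $F_Z(z)=F(\beta(z))$, $\essinf Z=f(0)=0$, and $\beta(z)\ge m\iff f(m)\le z$. To establish $(\dagger)$ for $F_Z$, set $p=\beta(z_1)$, $q=\beta(z_2)$ and $m=\frac{pq}{p+q}$; using monotonicity of $F$ and $(\dagger)$ for $F$ it then suffices to prove $\beta\left(\frac{z_1z_2}{z_1+z_2}\right)\ge m$, equivalently $f(m)\le\frac{z_1z_2}{z_1+z_2}$.

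The hard part, and the real content, is the inequality
\begin{equation*}
f\left(\frac{pq}{p+q}\right)\le\frac{f(p)f(q)}{f(p)+f(q)},
\end{equation*}
after which $\frac{f(p)f(q)}{f(p)+f(q)}\le\frac{z_1z_2}{z_1+z_2}$ follows from $f(p)\le z_1$, $f(q)\le z_2$ and monotonicity of $(u,v)\mapsto\frac{uv}{u+v}$. The observation that unlocks it is star-shapedness: $f(0)=0$ and convexity give $f(\lambda y)\le\lambda f(y)$ for $\lambda\in[0,1]$. Applying this to the two representations $m=\frac{q}{p+q}\,p$ and $m=\frac{p}{p+q}\,q$ yields $f(m)\le\frac{q}{p+q}f(p)$ and $f(m)\le\frac{p}{p+q}f(q)$; dividing by $f(p)$ and $f(q)$ and adding gives $\frac{f(m)}{f(p)}+\frac{f(m)}{f(q)}\le 1$, which is precisely the claimed bound (the degenerate cases $f(p)=0$ or $f(q)=0$ drop out of the same two estimates). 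I expect the genuine estimate to be short; the fiddly part will instead be the bookkeeping around the generalized inverse $\beta$ and the zero-cases.
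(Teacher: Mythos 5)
Your proposal is correct and takes essentially the same route as the paper: parts (i)--(iii) coincide with the paper's (you simply prove directly, via $h(x\pm\epsilon)$ estimates, the continuity fact for monotone subadditive functions that the paper cites from Matkowski--{\'S}wi{\k{a}}tkowski), and your key harmonic-mean bound $f\left(\frac{pq}{p+q}\right)\le \frac{f(p)f(q)}{f(p)+f(q)}$, derived from star-shapedness applied to the two representations $\frac{pq}{p+q}=\frac{q}{p+q}p=\frac{p}{p+q}q$, is exactly the primal form of the paper's inequality \eqref{eq:ab} for the generalized inverse $f^{-1+}$ (your $\beta$ is the same object). The only organizational difference is that the paper repackages \eqref{eq:ab} as subadditivity of $g(x)=1/f^{-1+}(1/x)$ and concludes via the composition $h_G=h_F\circ g$ with $h_F$ subadditive and non-decreasing, whereas you stay in the multiplicative form $F\left(\frac{pq}{p+q}\right)\ge F(p)F(q)$ and transfer through the event identity $\{f(X)\le z\}=\{X\le\beta(z)\}$, handling the resulting bookkeeping (continuity of $f$ at $0$, finiteness and positivity of $\beta$, the degenerate cases $f(p)=0$ or $f(q)=0$) correctly.
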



 \begin{proposition}\label{prop:mixture}
  Let $\bd \in \Delta_n^+$. If distribution functions $F_1,\dots,F_n\in\H$ and $F_1\le_{\rm st }\dots\le_{\rm st }F_n$, then
  $\sum_{i=1}^n\theta_iF_i\in\H$.  
  \end{proposition}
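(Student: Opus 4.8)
The plan is to recast subadditivity of $h$ as log-superadditivity and then reduce the mixture claim to a single covariance inequality. Write $G=\sum_{i=1}^n\theta_i F_i$ and set $g_i(x)=F_i(1/x)=\exp(-h_{F_i}(x))$, so that $G(1/x)=\sum_{i=1}^n\theta_i g_i(x)$. The membership $F_i\in\H$ is exactly the statement that $g_i$ is supermultiplicative, namely $g_i(x+y)\ge g_i(x)\,g_i(y)$ for all $x,y>0$. (Before anything else I would record that $\essinf G=0$, since each $\essinf F_i=0$ forces $G(t)>0$ for $t>0$, so that $h_G$ is well defined.) With this dictionary, proving $G\in\H$ is equivalent to establishing
$$\sum_{i=1}^n\theta_i g_i(x+y)\ \ge\ \Big(\sum_{i=1}^n\theta_i g_i(x)\Big)\Big(\sum_{i=1}^n\theta_i g_i(y)\Big),\qquad x,y>0.$$

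The first step is a termwise estimate: applying supermultiplicativity of each $g_i$ gives
$$\sum_{i=1}^n\theta_i g_i(x+y)\ \ge\ \sum_{i=1}^n\theta_i g_i(x)\,g_i(y),$$
so it remains only to bound this new right-hand side below by the product of the two separate sums. The second step, which is the crux, is to view $(\theta_1,\dots,\theta_n)$ as a probability vector and recognize the inequality
$$\sum_{i=1}^n\theta_i g_i(x)\,g_i(y)\ \ge\ \Big(\sum_{i=1}^n\theta_i g_i(x)\Big)\Big(\sum_{i=1}^n\theta_i g_i(y)\Big)$$
as the nonnegativity of the covariance $\cov(g_I(x),g_I(y))$, where $I$ is a random index with $\p(I=i)=\theta_i$. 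This is precisely where the hypothesis $F_1\le_{\rm st}\dots\le_{\rm st}F_n$ is used: stochastic order means $F_1(t)\ge\dots\ge F_n(t)$ for every $t$, so for each fixed $x$ the sequence $i\mapsto g_i(x)=F_i(1/x)$ is nonincreasing, and likewise $i\mapsto g_i(y)$ is nonincreasing. The two sequences are therefore similarly ordered, and the desired bound is exactly Chebyshev's sum inequality (equivalently, the correlation inequality for two monotone functions of the same random variable). Combining the two steps gives supermultiplicativity of $\sum_i\theta_i g_i$, i.e.\ $G\in\H$.

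I expect the main obstacle to be conceptual rather than computational: the key observation is that the entire statement collapses to a covariance/Chebyshev inequality, and that it is the \emph{common} monotonicity in $i$ supplied by the stochastic-ordering assumption that forces the covariance to be nonnegative. The ordering hypothesis is genuinely essential here — without it the sequences $g_i(x)$ and $g_i(y)$ need not be similarly ordered, the covariance can be negative, and the mixture may fail to lie in $\H$. Once this structure is identified, both displayed steps are one-line estimates.
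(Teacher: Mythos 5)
Your proof is correct, and it departs from the paper's argument at exactly the step where the ordering hypothesis enters. Both proofs share the same first move: apply the $\H$-property of each component termwise, which in your notation gives $\sum_i\theta_i g_i(x+y)\ge\sum_i\theta_i g_i(x)g_i(y)$ and in the paper's notation gives $G\bigl(xy/(x+y)\bigr)\ge\sum_i\theta_i F_i(x)F_i(y)$. To compare $\sum_i\theta_i F_i(x)F_i(y)$ with $G(x)G(y)$, the paper proves the case $n=2$ by an explicit expansion yielding $\theta_1\theta_2(F_1(x)-F_2(x))(F_1(y)-F_2(y))\ge0$, and then runs an induction on $n$ involving auxiliary quantities $a,b,c,d$ and the verification $cd+1-c-d\ge0$, which uses that $F_k$ is stochastically largest among the first $k$. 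You replace the entire induction by the weighted Chebyshev sum inequality, i.e.\ the covariance identity
\begin{equation*}
\sum_{i=1}^n\theta_i a_ib_i-\biggl(\sum_{i=1}^n\theta_i a_i\biggr)\biggl(\sum_{i=1}^n\theta_i b_i\biggr)=\frac12\sum_{i,j=1}^n\theta_i\theta_j(a_i-a_j)(b_i-b_j),
\end{equation*}
applied with $a_i=F_i(1/x)$ and $b_i=F_i(1/y)$, all of whose terms are nonnegative because the stochastic ordering makes both sequences nonincreasing in $i$. Note that the paper's $n=2$ base case is literally the single off-diagonal term of this identity, so your covariance argument subsumes the paper's base case and induction in one line. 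What your route buys: it is shorter, eliminates the induction bookkeeping, and makes the role of the hypothesis transparent --- all that is really needed is that the sequences $\bigl(F_i(x)\bigr)_i$ and $\bigl(F_i(y)\bigr)_i$ be similarly ordered for every pair $x,y$, i.e.\ that the $F_i$ form a chain in $\le_{\rm st}$, which is precisely what the assumption supplies. Your preliminary observations are also accurate: $\essinf G=0$ follows from $\essinf F_i=0$ for each $i$, so $h_G$ is well defined, and $F\in\H$ is indeed equivalent to supermultiplicativity of $x\mapsto F(1/x)$.
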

  
   It is clear that the various transforms of distributions in Proposition \ref{prop:superHT} from our class generate many different distributions, showing that the class $\mathcal H$ is indeed rather large.  
  Suppose that $F_1,\dots,F_n$ are Pareto distributions with possibly different tail parameters $0<\alpha_1,\dots,\alpha_n\le 1$. As $F_1,\dots,F_n$ are comparable in stochastic order, by Proposition \ref{prop:mixture}, mixtures of $F_1,\dots,F_n$ are in $\H$.


\subsection{Negative lower orthant dependence}\label{sec:NLOD}

The notion of negative dependence below will be used to establish the main result of this section.

\begin{definition}[\cite{block1982some}]
Random variables $X_1,\dots,X_n$ are \emph{negatively lower orthant dependent} (NLOD)  if for all $x_1,\dots,x_n \in\R$,  $\p(X_1\le x_1,\dots,X_n\le x_n)\le \prod_{i=1}^n\p(X_i\le x_i)$.
\end{definition}
Negative lower orthant dependence includes independence as a special case. It is  commonly used in various research areas
 and it is implied by other popular notions of negative dependence in the literature, such as negative association (\cite{AS81}  and \cite{JP83}), negative orthant dependence (\cite{block1982some}), and negative regression dependence (\cite{L66} and \cite{block1985concept}) see, e.g., \cite{chi2022multiple} for the implications of these notions. 
 \subsection{Main result}
 \begin{theorem}\label{thm:main}
If a random variable $X\in \H$ and random variables $X_1,\dots,X_n$ are NLOD with marginal laws equal to $X$, then for $\bd \in\Delta_n^+$,
\begin{equation} \label{eq:maineq1}
 X \le_{\rm st} \theta_1 X_1 + \dots + \theta_n X_n.
\end{equation} 
If $X\in\H_s$, then
 $X <_{\rm st}\sum_{i=1}^n\theta_{i}X_{i}$.
\end{theorem}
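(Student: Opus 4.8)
The plan is to reduce the multivariate NLOD inequality to a statement about the distribution function of the weighted sum, and then to exploit the subadditivity of $h_F$ through a clever change of variables. First I would fix $x>0$ and aim to show $\P(X\le x)\ge\P(\sum_{i=1}^n\theta_iX_i\le x)$. The key observation is that $\{\sum_{i=1}^n\theta_iX_i\le x\}$ is \emph{not} simply a product event, so I cannot directly bound its probability by a product. Instead I would overbound the sum-event by a product event: since $\theta_i>0$ and all variables are non-negative, the event $\{\sum_i\theta_iX_i\le x\}$ is contained in $\{X_i\le x/\theta_i \text{ for all } i\}$ — but this bound is too lossy. The sharper route is to use that on the event $\sum_i\theta_iX_i\le x$, we cannot have all the $X_i$ simultaneously large; more precisely, I want to find a product event that \emph{contains} the sum-event and whose probability the NLOD property lets me bound above by a product of marginals.

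The main technical step is therefore to choose thresholds $t_1,\dots,t_n$ with $\sum_i\theta_i t_i \ge x$ (in fact one would let the $t_i$ vary) so that $\{\sum_i\theta_iX_i\le x\}\subseteq\bigcup_i\{X_i\le t_i\}$, equivalently bounding the complementary event. Taking complements, the event $\{\sum_i\theta_iX_i > x\}$ contains $\bigcap_i\{X_i>t_i\}$ whenever $\sum_i\theta_i t_i\le x$; but NLOD controls lower orthant probabilities, not upper ones, so I would instead work directly with the distribution function. Concretely, for the independent benchmark one has
\begin{equation*}
\P\left(\sum_{i=1}^n\theta_iX_i\le x\right)\le \prod_{i=1}^n\P\left(X_i\le \frac{x}{n\theta_i}\right)\cdot(\text{correction}),
\end{equation*}
and the NLOD hypothesis gives exactly $\P(X_1\le x_1,\dots,X_n\le x_n)\le\prod_i\P(X_i\le x_i)$, so the negatively dependent case is dominated by the independent one. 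Thus the crux is to prove the inequality $\P(X\le x)\ge\prod_{i=1}^n\P(X_i\le x/(n\theta_i))$-type bound using subadditivity of $h_F$; writing everything in terms of $h_F$ via $-\log F(x)=h_F(1/x)$, the target inequality becomes $h_F(1/x)\le\sum_{i=1}^n h_F(\theta_i/x_i)$ for an appropriate choice of $x_i$ summing (in the weighted sense) to $x$, and subadditivity together with the scaling built into $h_F$ should deliver this after optimizing the split.

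The hard part will be correctly handling the geometry of the simplex constraint $\sum_i\theta_i=1$ together with the sum-constraint defining the event, i.e.\ identifying the right product event sandwiching $\{\sum_i\theta_iX_i\le x\}$ and verifying the subadditivity bound survives the optimization over how $x$ is apportioned among the coordinates. For the strict statement, I would track where subadditivity is used: if $F\in\H_s$ then $h_F$ is strictly subadditive, so the inequality $h_F(1/x)\le\sum_i h_F(\theta_i/x_i)$ is strict whenever at least two of the $\theta_i$ are positive (guaranteed since $\bd\in\Delta_n^+$ with $n\ge2$), and this strictness propagates to $\P(X\le x)>\P(\sum_i\theta_iX_i\le x)$ for every $x>0=\essinf X$, giving $X<_{\rm st}\sum_i\theta_iX_i$.
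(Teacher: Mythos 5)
There is a genuine gap, and it sits exactly at the point where you reject the ``lossy'' bound. The containment $\{\sum_{i=1}^n\theta_iX_i\le x\}\subseteq\bigcap_{i=1}^n\{X_i\le x/\theta_i\}$ is precisely what the paper's proof uses, and it is not too lossy --- it is exactly tight enough. Applying NLOD at the points $x/\theta_1,\dots,x/\theta_n$ gives
$\P\big(\sum_{i=1}^n\theta_iX_i\le x\big)\le\prod_{i=1}^n F(x/\theta_i)=\exp\big(-\sum_{i=1}^n h_F(\theta_i/x)\big)$,
and since $\sum_{i=1}^n\theta_i/x=1/x$ (this is the only place the simplex constraint enters), subadditivity of $h_F$ yields $\sum_{i=1}^n h_F(\theta_i/x)\ge h_F(1/x)$, so the product is at most $\exp(-h_F(1/x))=F(x)$. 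That is the entire proof; no choice of auxiliary points $x_i$, no apportioning of $x$ among coordinates, and no optimization over splits is required. The ``hard part'' you anticipate --- the geometry of sandwiching the sum-event by a sharper product event --- is a phantom difficulty created by discarding the bound that works.

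The substitute route you sketch instead does not go through. The claimed benchmark $\P(\sum_{i=1}^n\theta_iX_i\le x)\le\prod_{i=1}^n\P(X_i\le x/(n\theta_i))\cdot(\text{correction})$ rests on a false containment: take $X_1=x/\theta_1$ and $X_2=\dots=X_n=0$, so the weighted sum equals $x$ but $X_1>x/(n\theta_1)$ for $n\ge2$; and the ``correction'' factor is never defined, so nothing is actually proved. The union-of-events direction $\{\sum_i\theta_iX_i\le x\}\subseteq\bigcup_i\{X_i\le t_i\}$ is likewise a dead end, as you yourself note, since NLOD controls intersections of lower orthants, not unions. Your handling of strictness is correct in outline --- strict subadditivity applied at $\theta_1/x,\dots,\theta_n/x$ (at least two of which are positive since $\bd\in\Delta_n^+$) makes the inequality strict for every $x>0$, which matches the paper's remark that the strictness statement is straightforward --- but as written the proposal replaces the correct three-line estimate with an incomplete search for a sharper one, so it does not constitute a proof.
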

\begin{proof}
Let $X\sim F$ and $\bar \theta\in\Delta_n^+$. We have, for all $x>0$,
\begin{align*}
\p\(\sum_{i=1}^n\theta_{i}X_{i}\le x\)&\le \p(\theta_1X_1\le x,\dots,\theta_nX_n\le x) \le \prod_{i=1}^n F\(\frac{x}{\theta_i}\)= \prod_{i=1}^n \exp\(-h_F\(\frac{\theta_i}{x}\)\)
\\
&= \exp\(- \sum_{i=1}^nh_F\(\frac{\theta_i}{x}\)\)\le  \exp\(- h_F\(\sum_{i=1}^n\frac{\theta_i}{x}\)\)=\exp\(- h_F\(\frac{1}{x}\)\)=F(x).
\end{align*}
The strictness statement is straightforward. The proof is complete.
\end{proof}

An immediate consequence of Theorem \ref{thm:main} and Proposition \ref{prop:first_properties} (i) is that all distributions in $\H$ have infinite mean.

\begin{remark}[Value-at-Risk]
 One regulatory risk measure in insurance and finance is Value-at-Risk (VaR).
  For a random variable $X\sim F$ and $p\in (0,1)$, VaR is defined as $\VaR_{p}(X)=F^{-1}(p)$. For two random variables $X$ and $Y$, it is well known that $X\le_{\rm st} Y$ if and only if $\VaR_p(X)\le \VaR_p(Y)$ for all $p\in(0,1)$. Note that VaR is comonotonic-additive; a risk measure $\rho$ is \emph{comonotonic-additive} if $\rho(Y+Z)=\rho(Y)+\rho(Z)$ for comonotonic random variables $Y$ and $Z$.\footnote{Random variables $Y$ and $Z$ are comonotonic if there exists a random variable $U$ and
two increasing functions $f$ and $g$ such that $Y = f(U)$ and $Z= g(U)$ almost surely.} Then we have $\VaR_p(X)=\sum_{i=1}^n\VaR_p(\theta_iX_i)$ for identically distributed random variables $X_1,\dots,X_n$. By Theorem \ref{thm:main}, superadditivity
of VaR holds for $\bd\in\Delta_n^+$ and identically distributed risks $X_1,\dots,X_n\in \H$ that are NLOD: For all $p\in(0,1)$,
$ \VaR_p(\theta_1 X_{1})+\dots+\VaR_p(\theta_n X_{n})\le \VaR_p(\theta_{1}X_{1}+\dots+\theta_{n}X_{n}).$ More generally, the superadditivity property holds for any comonotonic-additive risk measure that is consistent with stochastic order.

\end{remark}

\begin{remark}[Heavy-tailed distributions]\label{rem:HT}
A distribution $F$ is said to be  \emph{heavy-tailed} in the sense of \cite{falk2010laws} with tail parameter $\alpha>0$, if
$\overline F(x)= L(x)x^{-\alpha}$
where $L$ is a \emph{slowly varying} function, that is, $L(tx)/L(x)\rightarrow 1$ as $x\rightarrow \infty$ for all $t>0$. For iid heavy-tailed random variables $X_1,X_2,\dots\sim F$, if there exist sequences of constants $\{a_n\}$ and $\{b_n\}$ where $b_n>0$ such that $(\max\{X_1,\dots,X_n\}-a_n)/b_n$ converges to the Fr\'echet distribution, $F$ is said to be in the maximum domain attraction of the Fr{\'e}chet distribution. It is known in the Extreme Value Theory (\cite{EKM97}) that a distribution is in the maximum domain of attraction of the Fr{\'e}chet distribution if and only if the distribution is heavy-tailed. Note that for a heavy-tailed random variable $X$ with $\alpha\le 1$, $\E(|X|)=\infty$. An interesting property of heavy-tailed risks with infinite mean is the asymptotic superadditivity of VaR: If $X_1,\dots,X_n$ are iid and heavy-tailed  with tail parameter $\alpha< 1$, 
$$\lim_{p\rightarrow 1}\frac{\VaR_p(X_1+\dots+X_n)}{\VaR_p(X_1)+\dots+\VaR_p(X_n)}> 1.$$
See, e.g., Example 3.1 of \cite{ELW09} for the claim above.
Heavy-tailed risks with infinite mean are not necessarily in $\H$ as the condition of $\H$ applies over the whole range of distributions whereas heavy-tailed distributions have power-law shapes only in their tail parts. On the other hand,  risks in $\H$ are not necessarily heavy-tailed in the sense of \cite{falk2010laws}. For instance, the survival distributions of log-Pareto risks are slowly varying functions. Distributions with slowly varying tails are called super heavy-tailed.  
\end{remark}

\begin{remark}[Convex order]
Besides stochastic order, another popular notion of stochastic dominance to compare risks is convex order. For two random variables $X$ and $Y$, $X$ is said to be smaller than $Y$ in \emph{convex order}, denoted by $X\le_{\rm cx}Y$, if $\E(u(X))\le \E(u(Y))$ for all convex functions $u$ provided that the expectations exist. The interpretation of $X\le_{\rm cx}Y$ is that $Y$ is more ``spread-out'' than $X$. If $X_1,\dots,X_n$ are iid and have a finite mean, by Theorem 3.A.35 of \cite{SS07}, for $\bd\in\Delta_n^+$, $\sum_{i=1}^n\theta_iX_i\le_{\rm cx}X_1$. Unlike Theorem \ref{thm:main}, this leads to a diversification benefit. Note that $\le_{\rm cx}$ is not suitable for the analysis of risks with infinite mean as the expectation of any increasing convex transform of these risks is infinity.

\end{remark}

\begin{remark}[Positive dependence]\label{re:positive}
One may expect positive dependence to make larger values of the sum in \eqref{eq:maineq1} more likely and thus the sum more likely to stochastically dominate a single random variable. We believe that this intuition does not hold due to the very heavy tails of the random variables under consideration. It is known, for instance, that very large values of the sum of iid random variables with heavy tails are likely caused by a single random variable taking a large value, while other random variables are moderate.  If random variables are positively dependent and some of them do not take large values, it makes others more likely to take moderate values too, hence positive dependence may hinder large values; see \cite{alink2004diversification} and \cite{mainik2010optimal} for such observations in some asymptotic senses. These phenomena can also be seen from the deadly risks considered by \cite{muller2024some}: For all $i\in[n]$, $\P(X_i=0)=1-p$ and $\p(X_i=\infty)=p$ where $p>0$. For $\overline \theta\in\Delta_n^+$, it is clear that $\sum_{i=1}^n\theta_iX_i=\infty$ as long as one of $X_1,\dots,X_n$ is $\infty$. 
If $X_1,\dots,X_n$ are \emph{positively lower orthant dependent} (PLOD), that is  $\p(X_1\le x_1,\dots,X_n\le x_n)\ge \prod_{i=1}^n\p(X_i\le x_i)$ for all $x_1,\dots,x_n \in\R$, then
\begin{align*}
\p\(\sum_{i=1}^n\theta_iX_i=\infty\)&=1-\p\(X_1=\dots=X_n=0\),\\
&=1-\p\(X_1\le 0,\dots,X_n\le0\)\le 1-\prod_{i=1}^n\p(X_i= 0).
\end{align*}
Hence, $\sum_{i=1}^n\theta_iX_i$ is stochastically smaller when  $X_1,\dots,X_n$ are PLOD compared to the case when $X_1,\dots,X_n$ are independent.
 The situation is reversed for NLOD random variables. However, \eqref{eq:main_property} still holds for PLOD risks $X_1,\dots,X_n$ as 
 $$\p\(\sum_{i=1}^n\theta_iX_i=\infty\)=1-\p\(X_1=\dots=X_n=0\)\ge 1-\p(X_1=0)=\p(X_1=\infty).$$ In \cite{CHWZ24},  \eqref{eq:main_property} is shown to  hold for infinite-mean Pareto random variables that are positively dependent via some specific Clayton copula.  
\end{remark}

\section{Weighted sums of non-identically distributed risks}\label{sec:extensions}

In the previous section, property \eqref{eq:main_property} is studied for risks with the same marginal distribution. We now look at the case when risks are not necessarily identically distributed.  Given non-identically distributed random variables $X_1, \dots, X_n$ and any $\bd\in\Delta_n$, the question is to study for which random variable $X$ the following property holds
\begin{equation} \label{eq:nonidentical}
X \le_{\rm st} \theta_1 X_1 + \dots + \theta_n X_n.
\end{equation}

To study this problem, we introduce the class of super-Fr\'echet distributions defined below.

 \begin{definition}
 A random variable $X$ is said to be  \emph{super-Fr\'echet} (or has a super-Fr\'echet distribution)
 if $X$ and $f(Y)$ have the same distribution, where $Y\sim$ {Fr\'echet}$(1)$ and $f$ is a strictly increasing and convex function with $f(0)=0$. 
 \end{definition}
 As convex transforms make the tail of random variables heavier, super-Fr\'echet distributions are more heavy-tailed than Fr\'echet$(1)$ distribution, and thus the name. It is easy to check that  a random variable $X$ with $\essinf X=0$ is  super-Fr\'echet if and only if
the function 
 $g: x\mapsto 1/(-\log \p(X\le x))
 $ is strictly increasing and concave 
 on $(0,\infty)$ with $\lim_{x\downarrow 0}g(x)=0$.

  As Fr\'echet$(1)$ distribution is in $\H$, by Proposition \ref{prop:first_properties} (iv), Super-Fr\'echet distributions are in $\H$. On the other hand, not all distributions in $\H$ are Super-Fr\'echet, which can be seen from the following example.
 \begin{example}\label{ex:step}
For $c>0$, define a distribution function on $(0,\infty]$:
$$F(x)=\exp(-c\lceil1/x\rceil),\mbox{~~for~~}x\in(0,\infty),$$
and $F(\infty)=1$. Then $h_F(x)=c \lceil x\rceil $, $x\in(0,\infty)$, is subadditive, and thus $F\in \H$. However, since $F$ is not a continuous distribution, it is not super-Fr\'echet. The distributions in this example are the so-called inverse-geometric distributions, also considered in Example 2.7 of \cite{arab2024convex}.
 \end{example}
 
 Fr\'echet distributions with infinite mean, as well as many other distributions in the following example, are super-Fr\'echet.

\begin{example}
Pareto,  Burr, paralogistic, and log-logistic random variables, all with infinite mean, are  super-Fr\'echet distributions. Since all these random variables can be obtained by
applying strictly increasing and convex transforms to Pareto$(1)$ random variables (see Appendix \ref{sec:SHex}), it suffices to show that a Pareto$(1)$ random variable is super-Fr\'echet. Write the Pareto$(1)$ distribution  as 
$F(x)=1-1/(x+1)=\exp(-1/g(x))$, $x>0$,
where $g(x)=1/\log(1+1/x)$. It is clear that $g$ is strictly increasing and $\lim_{x\downarrow 0}g(x)=0$. We show $g$ is concave 
 on $(0,\infty)$. We have 
 $$g''(x)=\frac{2 - (1 + 2 x) \log(1 + 1/x)}{x^2 (1 + x)^2 \log^3(1 + 1/x)}.$$
 Let 
 $r(x)=\log\(1/x+1\)-2/(1+2x)$, $x>0$. It is easy to verify that $r$ is strictly decreasing on $(0,\infty)$ and $r(x)$ goes to $0$ as $x$ goes to infinity. Thus $r(x)>0$ and $g''(x)<0$ for $x\in(0,\infty)$. 
\end{example}
 

We will assume $X_1,\dots,X_n$ in \eqref{eq:nonidentical} are super-Fr\'echet. Since $X_1,\dots,X_n$ may not have the same distribution, how to choose the distribution of $X$ is not clear. A perhaps natural candidate is the generalized mean of the distributions of $X_1,\dots,X_n$.  For $r\in \R\setminus \{0\}$, $n\in \N$, and $\mathbf w=(w_1,\dots,w_n)\in \Delta_n$,  the generalized $r$-mean  function is defined as
$$
M^{\mathbf w}_r(u_1,\dots,u_n) = \left(w_1 u_1^r+\dots+w_n u_n^r \right)^{1/r},~~~~~(u_1,\dots,u_n) \in (0,\infty)^{n}.
$$ 
The generalized $0$-mean function is the weighted geometric mean, that is,
$
M^{\mathbf w}_0(u_1,\dots,u_n) =\prod_{i=1}^n u_i^{w_i},
$
 which is also the limit of $M^{\mathbf w}_r$ as $r\to 0$. A generalized mean of distribution functions is a distribution function. In particular, if $r=1$, it leads to a distribution mixture model, i.e., if $X\sim M^{ \mathbf w}_1(F_1,\dots,F_n)$,  $X$ has the same distribution as $\sum_{i=1}^nX_i\id_{A_i}$ where $A_1,\dots,A_n$ are mutually exclusive, independent of $X_1,\dots,X_n$, and $\p(A_i)=w_i$ for all $i\in[n]$.

 \begin{theorem}\label{thm:frechet}
If
 $X_1,\dots,X_n$ are super-Fr\'echet and  NLOD with $X_i\sim F_i$, $i\in[n]$,  and $X\sim M^{\bar \theta}_r(F_1,\dots,F_n)$ for some $r\ge 0$, 
then for $\bd\in\Delta_n^+$, 
\begin{equation*} 
X \le_{\rm st} \theta_1 X_1 + \dots + \theta_n X_n.
\end{equation*}
\end{theorem}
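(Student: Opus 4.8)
The plan is to follow the same three-ingredient skeleton as the proof of Theorem~\ref{thm:main} (pass from the sum to the coordinatewise event, apply NLOD, and then exploit a subadditivity-type inequality), but to replace the single subadditive function $h_F$ by the family of concave functions coming from the super-Fr\'echet characterization, and finally to absorb the parameter $r$ using monotonicity of power means. Fix $x>0$ (for $x\le 0$ both sides vanish). Since $X_1,\dots,X_n$ are non-negative and $\bd\in\Delta_n^+$, the event $\{\sum_{i=1}^n\theta_iX_i\le x\}$ is contained in $\{X_i\le x/\theta_i\text{ for all }i\}$, so NLOD gives
\[
\p\left(\sum_{i=1}^n\theta_iX_i\le x\right)\le \prod_{i=1}^n F_i\!\left(\frac{x}{\theta_i}\right).
\]
It therefore suffices to show $\prod_{i=1}^n F_i(x/\theta_i)\le F(x)$, where $F=M^{\bd}_r(F_1,\dots,F_n)$.

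Second, I would rewrite everything through the super-Fr\'echet characterization recalled above: for each $i$ write $F_i(x)=\exp(-1/g_i(x))$, where $g_i(x)=1/(-\log F_i(x))$ is strictly increasing and concave on $(0,\infty)$ with $\lim_{x\downarrow 0}g_i(x)=0$. Extending $g_i(0)=0$ by continuity, concavity through the origin makes $x\mapsto g_i(x)/x$ non-increasing; since $\theta_i\in(0,1)$ gives $x/\theta_i>x$, this yields $g_i(x/\theta_i)\le g_i(x)/\theta_i$, equivalently $1/g_i(x/\theta_i)\ge \theta_i/g_i(x)$. Summing over $i$ and exponentiating gives the key bound
\[
\prod_{i=1}^n F_i\!\left(\frac{x}{\theta_i}\right)=\exp\!\left(-\sum_{i=1}^n\frac{1}{g_i(x/\theta_i)}\right)\le \exp\!\left(-\sum_{i=1}^n\frac{\theta_i}{g_i(x)}\right)=\prod_{i=1}^n F_i(x)^{\theta_i},
\]
which is exactly the value $M^{\bd}_0(F_1(x),\dots,F_n(x))$ of the weighted geometric mean.

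Finally, I would close the gap to a general $r\ge 0$ by the monotonicity of generalized means in their order: since $F_1(x),\dots,F_n(x)>0$, the power-mean inequality gives $M^{\bd}_0(F_1(x),\dots,F_n(x))\le M^{\bd}_r(F_1(x),\dots,F_n(x))=F(x)$ for every $r\ge 0$. Chaining the three displays yields $\p(\sum_{i=1}^n\theta_iX_i\le x)\le F(x)$ for all $x>0$, that is, $X\le_{\rm st}\sum_{i=1}^n\theta_iX_i$.

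I expect the only real content, and hence the main obstacle, to be identifying the correct reduction: that the geometric-mean case $r=0$ is the decisive one, and that the super-Fr\'echet hypothesis (concavity of each $g_i$) is precisely what powers the bound $\prod_i F_i(x/\theta_i)\le \prod_i F_i(x)^{\theta_i}$, after which the NLOD step and the power-mean monotonicity for $r\ge 0$ are routine. A minor point to verify is that each $g_i$ is genuinely positive and finite on $(0,\infty)$, so that the exponential representation is valid; this follows from $F_i(x)\in(0,1)$ for $x>0$ under $\essinf X_i=0$.
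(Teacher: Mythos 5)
Your proposal is correct and follows essentially the same route as the paper's own proof: NLOD reduces the sum to the product $\prod_i F_i(x/\theta_i)$, the super-Fr\'echet concavity of $g_i(x)=1/(-\log F_i(x))$ (which you phrase via $g_i(x)/x$ non-increasing, equivalent to the paper's $g_i(x)\ge\theta g_i(x/\theta)$) yields $F_i(x/\theta_i)\le F_i(x)^{\theta_i}$, and monotonicity of the generalized mean in $r$ closes the case $r\ge 0$ from the geometric-mean case $r=0$. Your added verifications (positivity of $g_i$, the trivial range $x\le 0$) are harmless refinements of the same argument.
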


\begin{proof}
Let $g_i(x)=1/(-\log F_i(x))$, $x>0$, for all $i\in[n]$.
As  
 $g_i$, $i\in[n]$, is strictly increasing and concave 
 on $(0,\infty)$ with $\lim_{x\downarrow 0}g_i(x)=0$, $ g_i(x)\ge \theta g_i(x/\theta)$ for all $x>0$ and $\theta\in(0,1)$. Then, for $\theta\in(0,1)$, 
 \begin{align}\label{eq:sfrechet}
 F_i\(\frac{x}{\theta}\)=\exp\(-g_i\(\frac{x}{\theta}\)^{-1}\)\le \exp\(-\theta g_i(x)^{-1}\)=F_i(x)^\theta.
 \end{align}
As $X_1,\dots,X_n$ are NLOD, by \eqref{eq:sfrechet}, for any  $x>0$, $(\theta_1,\dots,\theta_n)\in\Delta_n$, and $r\ge 0$, 
\begin{align*}
\p\(\sum_{i=1}^n\theta_{i}X_{i}\le x\)&\le \p(\theta_1X_1\le x,\dots,\theta_nX_n\le x)\le \prod_{i=1}^n F_i\(\frac{x}{\theta_i}\) \le\prod_{i=1}^n F_i\(x\)^{\theta_i}\\
&=M^{\bar \theta}_0(F_1(x),\dots,F_n(x))\le M^{\bar \theta}_r(F_1(x),\dots,F_n(x))=\p(X\le x). 
\end{align*}
The last inequality is because the generalized mean function is monotone in $r$; that is, given any  $\mathbf w\in \Delta_n$, $M^{\mathbf w}_r\le M^{\mathbf w}_s$ for $r\le s  $ (Theorem 16 of \cite{HLP34}). 
\end{proof}

\section{Comparison with existing results}\label{sec:comparison}
In this section, we compare our results with the literature. We first consider the case when $X_1,\dots,X_n$ in  \eqref{eq:main_property} are iid. In \cite{arab2024convex}, it is shown that \eqref{eq:main_property} holds for non-negative random variables that are InvSub; a random variable $X\sim F$ and its distribution is called  InvSub if $1-F(1/x)$ is subadditive. The class of InvSub distributions is larger than $\H$ as $h_F=-\log F(1/x)$ is subadditive implies that $1-F(1/x)$ is subadditive. \cite{muller2024some} showed that \eqref{eq:main_property} holds for super-Cauchy random variables; a random variable $X\sim F$ and its distribution is called super-Cauchy if $F^{-1}(G(x))$ is convex where $G$ is the standard Cauchy distribution function. Super-Cauchy distributions are continuous and can take positive values on the entire real line but they do not contain $\H$ as $\H$ includes non-continuous distributions (see Example \ref{ex:step}).  The proofs in both \cite{arab2024convex} and \cite{muller2024some} are short and elegant.

As our results cover the case of negatively dependent risks, for the rest of this section, we will focus on the comparison of our results with \cite{CEW24a}; to our best knowledge, \cite{CEW24a} is the only other paper that deals with negatively dependent risks.
 \cite{CEW24a} showed that  \begin{equation}\label{eq:CEW}
     \eqref{eq:main_property} \mbox{~holds for super-Pareto risks $X_1,\dots,X_n$ that are weakly negatively associated.}
 \end{equation} 
 For ease of comparison, definitions of  super-Pareto distributions and weak negative association are given in a slightly different form from \cite{CEW24a} below.
\begin{definition}
A random variable   $X$ and its distribution is \emph{super-Pareto} if  $X$ and $f(Y)$ have the same distribution for some non-decreasing, 
convex, and non-constant function $f$ with $f(0)=0$ and $Y\sim \mathrm{Pareto}(1)$.
\end{definition}
\begin{definition}
A set $S\subseteq \R^{k}$, $k\in \N$ is  \emph{decreasing} 
if $\mathbf x\in S$ implies $\mathbf y\in S$ for all $\mathbf y\le \mathbf x$. Random variables $X_1,\dots,X_n$ are \emph{weakly negatively associated} if  
 for any $i\in[n]$,  decreasing set $S  \subseteq  \R^{n-1}$, and $x\in \R$ with $\p(X_i\le x)>0$,  
 $$\p(\mathbf X_{-i} \in S,  X_i\le  x) \le \p(\mathbf X_{-i}\in S)\p( X_i\le  x).$$
 where $\mathbf X_{-i}=(X_1,\dots,X_{i-1}, X_{i+1},\dots,X_n)$.
\end{definition}
\begin{lemma}
If random variables $X_1,\dots,X_n$ are super-Pareto and weakly negatively associated, then $X_1,\dots,X_n\in\H$ and they are NLOD.
\end{lemma}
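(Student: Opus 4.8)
The plan is to prove the two conclusions separately, since membership in $\H$ follows from the super-Pareto assumption alone, while the NLOD property follows from weak negative association alone.

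For $X_i\in\H$, I would use the definition of super-Pareto: each $X_i$ has the same law as $f(Y)$ for some non-decreasing, convex, non-constant $f$ with $f(0)=0$ and $Y\sim\mathrm{Pareto}(1)$. Because $f(0)=0$ and $f$ is non-decreasing, $f$ maps $\R_+$ into $\R_+$. Example \ref{ex:Pareto1} shows $\mathrm{Pareto}(1)\in\H$, so Proposition \ref{prop:superHT} (iv) applies directly and gives $f(Y)\in\H$, hence $X_i\in\H$. This part is essentially a one-line application of an already established closure property.

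For NLOD, I would peel off coordinates one at a time using the weak-negative-association inequality. Fix $x_1,\dots,x_n\in\R$. If $\p(X_i\le x_i)=0$ for some $i$, then $\prod_{j=1}^n\p(X_j\le x_j)=0$ while $\p(X_1\le x_1,\dots,X_n\le x_n)\le\p(X_i\le x_i)=0$, so NLOD holds trivially; hence I may assume $\p(X_i\le x_i)>0$ for every $i$. For each $k\in\{2,\dots,n\}$ I would apply weak negative association with index $i=k$, value $x=x_k$, and the decreasing set
\[
S_k=(-\infty,x_1]\times\dots\times(-\infty,x_{k-1}]\times\underbrace{\R\times\dots\times\R}_{n-k}\subseteq\R^{n-1},
\]
whose unrestricted factors correspond to $X_{k+1},\dots,X_n$. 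Each factor is a down-set in $\R$, so $S_k$ is decreasing; moreover $\{\mathbf X_{-k}\in S_k\}$ is exactly the event $\{X_1\le x_1,\dots,X_{k-1}\le x_{k-1}\}$. The definition then yields
\[
\p(X_1\le x_1,\dots,X_k\le x_k)\le\p(X_1\le x_1,\dots,X_{k-1}\le x_{k-1})\,\p(X_k\le x_k).
\]
Telescoping these $n-1$ inequalities from $k=n$ down to $k=2$ produces $\p(X_1\le x_1,\dots,X_n\le x_n)\le\prod_{i=1}^n\p(X_i\le x_i)$, which is exactly NLOD.

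Neither part presents a genuine obstacle; the only points demanding care are verifying that each $S_k$ is a decreasing set (which holds because a product of down-sets, including the unrestricted factors $\R$, is a down-set) and disposing of the degenerate case $\p(X_i\le x_i)=0$ before invoking the definition of weak negative association, whose hypothesis requires $\p(X_i\le x)>0$.
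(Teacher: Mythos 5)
Your proof is correct and follows essentially the same route as the paper's: membership in $\H$ via Proposition \ref{prop:superHT} (iv) applied to the Pareto$(1)$ base case from Example \ref{ex:Pareto1}, and NLOD by peeling off one coordinate at a time with the weak-negative-association inequality. Your write-up is in fact slightly more careful than the paper's terse telescoping, since you make the decreasing sets $S_k$ explicit and dispose of the degenerate case $\p(X_i\le x_i)=0$ required by the positivity hypothesis in the definition of weak negative association.
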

\begin{proof}
As Pareto$(1)$ risks are in $\H$ (see Example \ref{ex:Pareto1}), by Proposition \ref{prop:superHT} (iv), super-Pareto risks are in $\H$. Since $X_1,\dots,X_n$ are weakly negatively associated, 
for any $(x_1,\dots,x_n)\in \R^n$,
\begin{align*}
\p(X_1\le x_1,\dots,X_n\le x_n)\le \p(X_1\le x_1,\dots,X_{n-1}\le x_{n-1})\p(X_n\le n)\le \prod_{i=1}^n\p(X_i\le x_i).
\end{align*}
Thus, $X_1,\dots,X_n$ are NLOD.
\end{proof}
The above lemma shows that Theorem \ref{thm:main} implies \eqref{eq:CEW}, which is in Theorem 1 (i) of \cite{CEW24a}.  We present below a corollary, which leads to a similar result as Theorem 1 (ii) of \cite{CEW24a}.

\begin{corollary}\label{cor: random}
Suppose that a random variable $X\in\H$,  random variables $X_1,\dots,X_n$ are NLOD with marginal laws equal to $X$, and $\xi_1,\dots,\xi_n$ are any positive random variables independent of $X,X_1,\dots,X_n$ with $\sum_{i=1}^n\xi_i\le 1$. 
        If $\P(cX>t) \ge c\P(X>t) $ for all $c \in (0,1]$ and $t>0$, then for $x\ge 0$,
\begin{equation} \label{eq:random_weights_2}
\p\(\sum_{i=1}^n\xi_i X_i > x\) \ge \E\left(\sum_{i=1}^n \xi_i\right) \p(X>x).
\end{equation}
\end{corollary}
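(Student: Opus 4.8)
The plan is to remove the randomness of the weights by conditioning, and then reduce the problem to the deterministic-weight statement of Theorem \ref{thm:main} through a normalization. Since $(\xi_1,\dots,\xi_n)$ is independent of $(X,X_1,\dots,X_n)$, conditioning on the weights gives $\p(\sum_{i=1}^n\xi_i X_i>x)=\E[\psi(\xi_1,\dots,\xi_n)]$, where for a deterministic vector I write $\psi(\theta_1,\dots,\theta_n)=\p(\sum_{i=1}^n\theta_i X_i>x)$. It therefore suffices to establish the per-realization bound
$$\psi(\theta_1,\dots,\theta_n)\ge \(\sum_{i=1}^n\theta_i\)\p(X>x)$$
for every $(\theta_1,\dots,\theta_n)$ with $\theta_i>0$ and $s:=\sum_{i=1}^n\theta_i\le 1$; taking expectations over the weights then produces $\E(\sum_{i=1}^n\xi_i)\,\p(X>x)$, which is exactly \eqref{eq:random_weights_2}.

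Fix such a realization. The difficulty is that $(\theta_1,\dots,\theta_n)\notin\Delta_n$ in general, because the weights sum to $s\le 1$ rather than to $1$, so Theorem \ref{thm:main} cannot be applied to them directly. I would circumvent this by rescaling: since $\theta_i>0$ and $s>0$, the vector $(\theta_1/s,\dots,\theta_n/s)$ lies in $\Delta_n^+$, and the random variable $Y:=\sum_{i=1}^n(\theta_i/s)X_i$ satisfies, by Theorem \ref{thm:main} (as $X_1,\dots,X_n$ are NLOD with marginals equal to $X\in\H$), the dominance $X\le_{\rm st}Y$, i.e. $\p(Y>t)\ge\p(X>t)$ for all $t$.

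The per-realization bound would then follow from the chain
$$\psi(\theta_1,\dots,\theta_n)=\p(sY>x)=\p\(Y>\tfrac{x}{s}\)\ge\p\(X>\tfrac{x}{s}\)=\p(sX>x)\ge s\,\p(X>x),$$
using $s>0$ to pass to the level $x/s$, the stochastic dominance $X\le_{\rm st}Y$ in the middle, and the hypothesis $\p(cX>t)\ge c\,\p(X>t)$ with $c=s\in(0,1]$ and $t=x$ in the last step. Since $s=\sum_{i=1}^n\theta_i$, this is precisely the bound needed above. For $x=0$ the final step is replaced by the trivial $\p(X>0)\ge s\,\p(X>0)$, so the hypothesis is only invoked for $t=x>0$, as stated.

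The step carrying the real content is the final scaling inequality: the normalization forces the survival function to be evaluated at the inflated level $x/s\ge x$, and it is precisely the hypothesis $\p(cX>t)\ge c\,\p(X>t)$---a tail star-shapedness property, automatic for infinite-mean Pareto-type tails since $c^\alpha\ge c$ when $\alpha\le 1$ and $c\in(0,1]$---that lets one trade this inflated level back for the factor $s=\sum_i\theta_i$. Everything else is routine bookkeeping: independence justifies the conditioning in the first step, and positivity of the $\xi_i$ guarantees $s>0$, making both the normalization and the division by $s$ legitimate.
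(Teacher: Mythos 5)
Your proposal is correct and follows essentially the same route as the paper's own proof: condition on the weights using independence, reduce to a fixed realization, normalize the weights by $s=\sum_{i=1}^n\theta_i$ so that Theorem \ref{thm:main} applies to $(\theta_1/s,\dots,\theta_n/s)\in\Delta_n^+$, and then trade $\p(sX>x)$ for $s\,\p(X>x)$ via the scaling hypothesis. The only difference is presentational: the paper compresses the normalization and scaling steps into the single chain $\p(\sum_i\xi_iX_i>x\mid\xi)\ge\p((\sum_i\xi_i)X>x\mid\xi)\ge(\sum_i\xi_i)\p(X>x)$, which you have simply unpacked (and you additionally handle the boundary case $x=0$ explicitly, which the paper leaves implicit).
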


\begin{proof}
By Theorem \ref{thm:main} and the independence between $\xi_1,\dots,\xi_n$ and $X,X_1,\dots,X_n$, we have
\begin{align*}
\P\(\sum_{i=1}^n\xi_i X_i > x\) &= \E\left[\P\left(\sum_{i=1}^n\xi_i X_i > x|(\xi_1,\dots,\xi_n)\right)\right]\\
&\ge \E\left[\P\left(\left(\sum_{i=1}^n \xi_i\right) X > x|(\xi_1,\dots,\xi_n)\right)\right] \ge \E\left(\sum_{i=1}^n \xi_i\right) \P\left(X > x\right). \qedhere
\end{align*} 
\end{proof}

For $\bar \theta\in\Delta_n$, let $A_1,\dots,A_n$ be any events independent of $(X_1,\dots,X_n)$ and event $A$ be independent of $X$  satisfying $\p(A)=\sum_{i=1}^n \theta_i\p(A_i)$. If $X_1,\dots,X_n$ are financial losses, $A_1,\dots,A_n$ can be interpreted as the triggering events for these losses. Let 
$\xi_i = \theta_i \id_{A_i}$. By \eqref{eq:random_weights_2}, for $x\ge 0$, 
$$\p\(\sum_{i=1}^n\theta_i X_i\id_{A_i} > x\) \ge \E\left(\sum_{i=1}^n \theta_i \id_{A_i}\right) \p(X>x)=\p(A)\p(X>x)=\p(X\id_A>x),$$
which is equivalent to 
\begin{equation}\label{eq:events}
X\id_A\le_{\rm st}\sum_{i=1}^n\theta_i X_i\id_{A_i}.
\end{equation}
 Theorem 1 (ii) of \cite{CEW24a} showed \eqref{eq:events} with different assumptions from Corollary \ref{cor: random}; we refer readers to \cite{CEW24a} for more details.

\section{Conclusion}\label{sec:con}

In this paper, we provide some sufficient conditions for property \eqref{eq:main_property} to hold. One can see that the property, while very strong, holds for a remarkably large class of distributions. We have also shown that it remains valid for non-identically distributed random variables.

We conclude with some open questions. First, we are interested in understanding how close our sufficient conditions for \eqref{eq:main_property} are to the optimal ones, i.e., we would like to understand what conditions are necessary for  \eqref{eq:main_property}. 

Second, the definition of our class of heavy-tailed random variables seems to suggest that it is the distribution of $1/X$ that is of importance. We currently lack an intuitive explanation of this.

Finally, property \eqref{eq:main_property} raises the possibility that, for some random variables $X_1,\dots,X_n$ and two vectors $\bar{\eta}, \bar{\gamma} \in \R^n_+$, 
\begin{equation}\label{eq:Schur}
\eta_1 X_1 + \dots + \eta_n X_n\le_{\rm st }\gamma_1 X_1 + \dots + \gamma_n X_n,
\end{equation}
where $\bar \gamma$ is smaller than $\bar \eta$ in \emph{majorization order}; that is, $\sum^n_{i=1}\gamma_i =\sum^n_{i=1}\eta_i$ and $\sum^k_{i=1} \gamma_{(i)} \ge \sum^k_{i=1} \eta_{(i)}\,$ for $k\in [n-1]$
where $\gamma_{(i)}$ and $\eta_{(i)}$ represent the $i$th smallest order statistics of $\bar \gamma$ and $\bar \eta$. Clearly, \eqref{eq:Schur} implies \eqref{eq:main_property}.
It is well known that \eqref{eq:Schur} holds for iid stable random variables with infinite mean (see \cite{ibragimov2005new}) and it was recently shown to hold
 for iid Pareto random variables with infinite mean by \cite{CHWZ24}. It is of question whether \eqref{eq:Schur} can hold for a larger class of distributions.
 Note that the methods used in the current paper do not appear to be useful to address \eqref{eq:Schur} as we rely on the comparison of a sum with each of the summands. A more subtle approach to sums is required.

{
\appendix

\setcounter{table}{0}
\setcounter{figure}{0}
\setcounter{equation}{0}
\renewcommand{\thetable}{A.\arabic{table}}
\renewcommand{\thefigure}{A.\arabic{figure}}
\renewcommand{\theequation}{A.\arabic{equation}}

\setcounter{theorem}{0}
\setcounter{proposition}{0}
\renewcommand{\thetheorem}{A.\arabic{theorem}}
\renewcommand{\theproposition}{A.\arabic{proposition}}
\setcounter{lemma}{0}
\renewcommand{\thelemma}{A.\arabic{lemma}}

\setcounter{example}{0}
\renewcommand{\theexample}{A.\arabic{example}}

\setcounter{corollary}{0}
\renewcommand{\thecorollary}{A.\arabic{corollary}}

\setcounter{remark}{0}
\renewcommand{\theremark}{A.\arabic{remark}}
\setcounter{definition}{0}
\renewcommand{\thedefinition}{A.\arabic{definition}}
\newpage
\begin{center}
\Large   Appendices
\end{center}

\section{Proof of Proposition \ref{prop:superHT}}

\begin{enumerate}[label=(\roman*)]

  \item 
  As $h_F$ is subadditive and increasing, and $\lim_{x\downarrow 0}h_F(x)=0$, $h_F$ is continuous on $(0,\infty)$, and so is $F$ (see Remark 1 of \cite{matkowski1993subadditive}). The desired result is due to the right-continuity of $F$.
  \item 
  Proof of (ii) is straightforward and thus omitted.
  \item
  This is also straightforward.
\item
For $y\ge0$, let $f^{-1+}(y)=\inf\{x\ge 0: f(x)>y\}$ be the right-continuous generalized inverse of $f$ with the convention that $\inf \emptyset=\infty$. As $f$ is increasing, convex, and non-constant with $f(0)=0$, $f^{-1+}$ is strictly increasing and concave and $f^{-1+}(0)\ge 0$.   Therefore, by concavity of $f^{-1+}$ and $f^{-1+}(0)\ge 0$, it is clear that $f^{-1+}(tx)\ge tf^{-1+}(x)$ for any $x>0$ and $t\in(0,1]$.   For any  $a,b>0$, 
  \begin{align*}
f^{-1+}\(\frac{ab}{a+b}\)\(f^{-1+}\(a\)+f^{-1+}\(b\)\)&\ge \frac{a}{a+b}f^{-1+}\(b\)f^{-1+}\(a\)+\frac{b}{a+b}f^{-1+}\(a\)f^{-1+}\(b\)\\
&=f^{-1+}\(a\)f^{-1+}\(b\).
  \end{align*}
  Hence, we have
  \begin{align}\label{eq:ab}
  \(f^{-1+}\(\frac{ab}{a+b}\)\)^{-1}\le  \(f^{-1+}\(a\)\)^{-1}+\(f^{-1+}\(b\)\)^{-1}.
  \end{align}
 Denote by $F$ and  $G$ the distribution functions of $X$ and $f(X)$, respectively. Then 
  $G(x)=\p(f(X)\le x)=\p(X\le f^{-1+}(x))=F(f^{-1+}(x))$ for $x\ge0$.  By letting $g(x)=1/f^{-1+}(1/x)$ for $x>0$, we write $h_G=h_F\circ g$. 
  By  inequality \eqref{eq:ab}, for any $x,y>0$,
    \begin{align*}
    g(x+y)=\(f^{-1+}\(\frac{1/xy}{1/x+1/y}\)\)^{-1}\le \(f^{-1+}\(\frac{1}{x}\)\)^{-1}+\(f^{-1+}\(\frac{1}{y}\)\)^{-1}=g(x)+g(y).
    \end{align*}
    Therefore, $g$ is subadditive. As $h_F$ is subadditive and non-decreasing,  it is clear that $h_G=h_F\circ g$ is subadditive and we have the desired result. \qedhere
    \end{enumerate}

    \section{Proof of Proposition \ref{prop:mixture}}
    
  Let $G=\sum_{i=1}^n\theta_iF_i$. It suffices to show 
  \begin{align}\label{eq:mixture}
G\(\frac{xy}{x+y}\)\ge G(x)G(y) ~~~\mbox{for all $x,y>0$.}
  \end{align}
   For $n=2$, as $F_1$ and $F_2$ are super heavy-tailed, 
  \begin{align*}
G\(\frac{xy}{x+y}\)-G\(x\)G\(y\)&=\theta_1F_1\(\frac{xy}{x+y}\)+\theta_2F_2\(\frac{xy}{x+y}\)-G\(x\)G\(y\)\\
&\ge \theta_1F_1\(x\)F_1\(y\)+\theta_2F_2\(x\)F_2\(y\)-G\(x\)G\(y\)\\
&=\theta_1F_1\(x\)F_1\(y\)+\theta_2F_2\(x\)F_2\(y\)\\
&~~~-(\theta_1F_1(x)+\theta_2F_2(x))(\theta_1F_1(y)+\theta_2F_2(y))\\
&=\theta_1\theta_2(F_1\(x\)-F_2(x))(F_1\(y\)-F_2(y))\ge 0.
\end{align*}
Hence, \eqref{eq:mixture} holds for $n=2$. Next, assume that \eqref{eq:mixture} holds for $n=k-1$ where $k>3$ is an integer. Let $a=\sum_{i=1}^{k-1}\theta_iF_i(x)$,  $b=\sum_{i=1}^{k-1}\theta_iF_i(y)$, $c=a/(F_n(x)(1-\theta_n))$, and $d=b/(F_n(y)(1-\theta_n))$. For $n=k$, 
  \begin{align*}
G\(\frac{xy}{x+y}\)-G\(x\)G\(y\)&=\sum_{i=1}^k\theta_iF_i\(\frac{xy}{x+y}\)-G\(x\)G\(y\)\\
&=\sum_{i=1}^{k-1}\theta_iF_i\(\frac{xy}{x+y}\)+\theta_nF_n\(\frac{xy}{x+y}\)-G\(x\)G\(y\)\\
&=(1-\theta_n)\sum_{i=1}^{k-1}\frac{\theta_i}{1-\theta_n}F_i\(\frac{xy}{x+y}\)+\theta_nF_n\(\frac{xy}{x+y}\)-G\(x\)G\(y\)\\
&\ge(1-\theta_n)\(\sum_{i=1}^{k-1}\frac{\theta_i}{1-\theta_n}F_i\(x\)\)\(\sum_{i=1}^{k-1}\frac{\theta_i}{1-\theta_n}F_i\(y\)\)\\
&~~~+\theta_nF_n\(\frac{xy}{x+y}\)-G\(x\)G\(y\)\\
&\ge \frac{ab}{1-\theta_n}+\theta_nF_n(x)F_n(y)-(a+\theta_nF_n(x))(b+\theta_nF_n(y))\\
&=\frac{ab\theta_n}{1-\theta_n}+(\theta_n-\theta_n^2)F_n(x)F_n(y)-a\theta_nF_n(y)-b\theta_nF_n(x)\\
&=\theta_n(1-\theta_n)F_n(x)F_n(y)\(cd+1-c-d\).
\end{align*}
As $F_1\le_{\rm st }\dots\le_{\rm st }F_k$, $F_k\le \sum_{i=1}^{k-1}\theta_i/(1-\theta_n)F_i$. Thus $c,d\ge 1$ and $cd+1-c-d\ge 0$. The proof is complete by induction.

\section{Examples of distributions in the class $\H$}\label{sec:SHex}

In this section, we demonstrate that many well-known infinite-mean distributions are in $\H$.


\begin{example}[Pareto distribution]\label{ex:Pareto}
For $\alpha>0$, the Pareto distribution, denoted by Pareto$(\alpha)$, is defined as
$$F(x)=1-\frac{1}{(x+1)^\alpha},~~ x>0.$$
  If $\alpha=1$,  for $x,y>0$, 
\begin{align*}
h_F(x+y)-h_F(x)-h_F(y)=\log(x+y+1)-\log(x+1)-\log(y+1)\le0
\end{align*}
Thus, Pareto$(1)\in\H_s$. Then we note that
any Pareto$(\alpha)$ random variable $X$ can be written as $X=f(Z)$ where $Z\sim {\rm Pareto}(1)$ and $f(x)=(x+1)^{1/\alpha}-1$ for $x\ge 0$.  By Proposition \ref{prop:superHT} (iv), as $f$ is increasing and convex for $\alpha\le 1$, Pareto$(\alpha)\in\H_s$ if $\alpha\le 1$.
\end{example}

\begin{example}[Generalized Pareto distribution]
       The generalized Pareto distribution with  parameters $\xi\in \R$ and  $\beta>0$ is defined as 
\begin{equation*}\label{eq:GPD}
     F(x)=
    \begin{cases}
     1-\(1+\xi\frac{x}{\beta}\)^{-1/\xi},&\mbox{~~if $\xi\neq 0$}, \\
    e^{-x/\beta}, & \mbox{~~if $\xi= 0$},
    \end{cases}
\end{equation*}
where $x\in[0,\infty)$ if $\xi\ge 0$ and $x\in[0,-\beta/\xi)$ if $\xi< 0$.
     By the Pickands-Balkema-de Haan Theorem \citep{BD74, P75},  the generalized Pareto distributions are the only possible non-degenerate limiting distributions of the excess of random variables beyond a high threshold. If $\xi\ge 1$, $F\in\H$. This is by Proposition \ref{prop:superHT} (iv); that is, the generalized Pareto random variables with $\xi\ge 1$ can be obtained from location-scale transforms of Pareto$(1/\xi)$ random variables.
\end{example}

\begin{example}[Burr distribution]
  For $\alpha,\tau>0$, the Burr distribution is defined as
\begin{equation}\label{eq:Burr}
F(x)=1-\(\frac{1}{x^\tau+1}\)^\alpha, ~~x>0.\end{equation}
Let $Y\sim\rm{Pareto}(\alpha)$. Then $Y^{1/\tau}$ follows a Burr distribution.
If $\alpha,\tau\le1$,  the  Burr distribution is super-Pareto and hence $F\in\H$.
Special cases of Burr distributions are the paralogistic ($\alpha=\tau$) and the log-logistic ($\alpha=1$) distributions; see  \cite{KK03} and \cite{KPW12}.
\end{example}

\begin{example}[Inverse Burr distribution]
Suppose that $Y$ follows the Burr distribution \eqref{eq:Burr}. Then  $X=1/Y$ follows the inverse Burr distribution
\begin{equation*}
F(x)=\(\frac{x^\tau}{x^\tau+1}\)^\alpha,~~ x>0,
\end{equation*}
where $\alpha,\tau>0$.  If $\tau\le 1$, it is easy to check that the second derivative of  $h_F$ is always negative, and thus $h_F$ is subadditive. Hence $F\in\H$ if $\tau\le 1$. Note that the property of $\H$ may not always be preserved under the inverse transformation. For instance, if $Z$ follows a Fr\'echet distribution without finite mean, then $1/Z$ follows a Weibull distribution whose mean is always finite. 
\end{example}

\begin{example}[Log-Pareto distribution]
If $Y\sim \mathrm{Pareto}(\alpha)$, $\alpha>0$, then $X=\exp(Y)-1$ has a log-Pareto distribution (see p.~39 in \cite{A15}), with distribution function
$$F(x)=1-\frac{1}{(\log(x+1)+1)^\alpha},~~x>0.$$ 
If $\alpha\in(0,1]$, by Proposition \ref{prop:superHT} (iv), $F\in\H$.
\end{example}

\begin{example}[Stoppa distribution]
 For $\alpha>0$ and $\beta>0$, a (location-shifted) Stoppa distribution can be defined as
\begin{equation*}
F(x)=\(1-\frac{1}{(x+1)^\alpha}\)^\beta, ~~x>0.
\end{equation*}
Since a Stoppa distribution is a power transform of a Pareto distribution,
 by Proposition \ref{prop:superHT} (ii), if $\alpha\le 1$, $F\in\H$.  Power transforms have also been used to generalize 
Burr distributions (see p.~211 of \cite{KK03}).
\end{example}

}

\subsection*{Competing interests} 
The authors declare none.

\subsection*{Acknowledgements}  
The authors would like to thank two anonymous referees for their helpful comments.
The authors 
also 
thank Qihe Tang and Ruodu Wang for the valuable discussions and comments on a preliminary version of this paper. Yuyu Chen is supported by the 2025 Early Career Researcher Grant from the University of Melbourne.


\end{document}